\documentclass[11pt,letterpaper]{article}

\usepackage[margin=1.15in,top=1.02in,bottom=1.02in]{geometry}
\usepackage{amsmath,amssymb,amsthm,mathtools}
\usepackage{authblk,needspace}
\usepackage{array,booktabs}
\usepackage{enumitem}
\usepackage{microtype}
\usepackage{xcolor}
\usepackage[colorlinks=true,linkcolor=blue!55!black,citecolor=blue!70!black,urlcolor=blue!60!black]{hyperref}

\hypersetup{
  colorlinks=true,
  linkcolor=red,
  citecolor=cyan!60!blue,
  urlcolor=blue
}

\setlist[enumerate]{leftmargin=2.2em,itemsep=2pt,topsep=4pt}
\allowdisplaybreaks

\newtheoremstyle{paperplain}
  {6pt}{6pt}{\itshape}{}{}{.}{0.5em}{\textbf{#1 #2}\thmnote{ \textup{(#3)}}}
\newtheoremstyle{paperdefinition}
  {6pt}{6pt}{\normalfont}{}{}{.}{0.5em}{\textbf{#1 #2}\thmnote{ \textup{(#3)}}}
\theoremstyle{paperplain}
\newtheorem{theorem}{Theorem}[section]
\newtheorem{lemma}[theorem]{Lemma}

\newtheorem{conjecture}[theorem]{Conjecture}
\theoremstyle{paperdefinition}
\newtheorem{definition}[theorem]{Definition}

\DeclareMathOperator{\mad}{mad}
\DeclareMathOperator{\diam}{diam}
\newcommand{\eps}{\varepsilon}

\title{The diameter of recoloring graphs under a maximum average degree bound}
\author{Ruilin Zheng$^1$}
\author{Junying Lu$^{2,}$\footnote{Corresponding author. junyinglu@njust.edu.cn.}}
\affil{\small $^1$School of Mathematics, Nanjing University, Nanjing 210093, P.R. CHINA\\
$^2$School of Mathematics and Statistics, Nanjing University of Science and Technology,\\
Nanjing 210094, P.R. CHINA}
\date{}

\begin{document}
\maketitle
\vspace{-1.2em}

\begin{abstract}
For a graph $G$, we write $\mad(G)$ for its maximum average degree
and $\diam G$ for its diameter. Let $R_k(G)$ be the graph whose vertices are the proper colorings of $G$ with $k$ colors, where two colorings are adjacent when they differ at one vertex. 
Feghali (JCTB, 2021) proved that, for fixed integers $d,k\ge 1$ with $k\ge d+1$ and every $\varepsilon>0$, every $n$-vertex graph $G$ satisfying $\mad(G)\le d-\varepsilon$ has
$\diam R_k(G)=O_{d,k,\varepsilon}(n(\log n)^{d-1})$.
In this article, we prove that
\[
 \diam R_k(G)=O_{d,k,\varepsilon}\!\left(
 n(\log n)^{\left\lfloor (d-1)/(k-d)\right\rfloor}
 \right),
\]
which extends the result proved by Feghali directly. 
The proof uses a partition into independent layers and removes $k-d$ colors at each recursive stage. 
We also improve the bound on the number of layers and determine the best possible linear coefficient in the forest case. 
More precisely, for $0<\eps<2$, every $n$-vertex graph $G$
with $\mad(G)\le2-\eps$ satisfies $\diam R_3(G)\le\rho_M n$,
where $M=\lfloor2/\eps\rfloor$,
$\rho_M=\max_{1\le m\le M}D_m/m$, and $D_m$ is the largest diameter
of $R_3(T)$ over all trees $T$ on $m$ vertices.
Moreover, the coefficient $\rho_M$ is best possible.
\end{abstract}

\medskip
\noindent\textbf{Keywords.} Graph recoloring; reconfiguration graph; maximum average degree; diameter

\section{Introduction}
All graphs in this article are finite and simple. 
For a graph $G$, we write $\diam G$ for its diameter,
with $\diam G=\infty$ when $G$ is disconnected. Let $k$ be a positive integer. 
A proper $k$-coloring of $G$ is a map $\varphi:V(G)\to \{1,2,\cdots,k\}$ such that adjacent vertices receive distinct colors. 
The \emph{$k$-reconfiguration graph} $R_k(G)$ is the graph whose vertices
are the proper $k$-colorings of $G$, with two colorings adjacent
if and only if they differ at exactly one vertex.

A graph is $d$-degenerate if every nonempty subgraph contains a vertex of degree at most $d$. The \emph{maximum average degree} of a graph $G$ is
\[
 \mad(G)=\max_{\emptyset\ne H\subseteq G}\frac{2|E(H)|}{|V(H)|}.
\]
In particular, $\mad(G)<d+1$ implies that $G$ is $d$-degenerate.
Dyer, Flaxman, Frieze, and Vigoda~\cite{DyerFlaxmanFriezeVigoda2006} and, independently, Cereceda, van den Heuvel, and Johnson~\cite{CerecedaVanDenHeuvelJohnson2008} proved that $R_k(G)$ is connected whenever $G$ is $d$-degenerate and $k\ge d+2$. 
Bonsma and Cereceda~\cite{BonsmaCereceda2009} showed that
superpolynomial recoloring distances and computational hardness
can occur without such restrictions. For $d$-degenerate graphs,
Cereceda~\cite{Cereceda2007} proposed the following conjecture.

\begin{conjecture}[Cereceda~\cite{Cereceda2007}]\label{con:Ce}
For every integer $d\ge0$, every $d$-degenerate graph $G$ on $n$ vertices, and every $k\ge d+2$, the graph $R_k(G)$ has diameter $O_d(n^2)$.
\end{conjecture}

The quadratic order is attained for some chordal graphs~\cite{BonamyJohnsonLignosPatelPaulusma2014}. Several general upper bounds are known. Cereceda~\cite{Cereceda2007} proved a quadratic bound when $k\ge2d+1$. Subsequently, Bousquet and Perarnau~\cite{BousquetPerarnau2016} proved a linear bound when $k\ge2d+2$. Bousquet and Heinrich~\cite{BousquetHeinrich2022} established a bound of order $n^{d+1}$ for every $k\ge d+2$,
as well as a quadratic bound when $k\ge 3(d+1)/2$. Their proof contains a list coloring lemma that treats several colors at the same time. This lemma is one of the ingredients used here.

Stronger statements are known for several special graph classes. 
A chordal graph is a graph in which every cycle of length at least four has a chord. 
Bonamy et al.~\cite{BonamyJohnsonLignosPatelPaulusma2014} confirmed Conjecture \ref{con:Ce} for chordal graphs, 
while Bonamy and Bousquet~\cite{BonamyBousquet2018} proved a quadratic bound when the number of colors is at least two more than the treewidth. 
For planar graphs, Eiben and Feghali~\cite{EibenFeghali2020} obtained a subexponential bound with seven colors. Dvo\v{r}\'ak and Feghali~\cite{DvorakFeghali2020,DvorakFeghali2021} proved linear bounds with ten colors, and with seven colors when the graph has no triangle. Bousquet and Heinrich~\cite{BousquetHeinrich2022} obtained a quadratic bound with five colors for bipartite planar graphs. Bartier et al.~\cite{BartierEtAl2023} established connectivity with four colors for planar graphs of girth five and a linear bound with five colors when the girth is at least six. Cranston and Mahmoud~\cite{CranstonMahmoud2024} proved a quadratic bound with five colors for planar graphs containing no cycles of lengths three or five.

The symbol $\log$ denotes the natural logarithm.
Bousquet and Perarnau~\cite{BousquetPerarnau2016} proved that, for fixed integers $d$ and $k$ with $k\ge d+1$ and fixed $\eps>0$, 
the diameter is polynomial when $\mad(G)\le d-\eps$. 
Feghali~\cite{Feghali2019Sparse} later gave a shorter proof. 
He then obtained the following stronger theorem by using a partition into independent layers~\cite{Feghali2021}.

\begin{theorem}[Feghali~\cite{Feghali2021}]\label{thm:feghali}
Let $d,k\ge1$ be integers with $k\ge d+1$, and let $\eps>0$. If $G$ is a graph on $n\ge1$ vertices and $\mad(G)\le d-\eps$, then
\[
 \diam R_k(G)=O_{d,k,\eps}\bigl(n(\log n)^{d-1}\bigr).
\]
\end{theorem}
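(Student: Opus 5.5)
We would follow Feghali's layered strategy. The argument induces on $d$, with the bound for $d$ obtained from the one for $d-1$ at the cost of an extra $\log n$ factor.

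\textbf{Layer decomposition.} Since $\mad(G)\le d-\eps$, every subgraph $H$ has average degree at most $d-\eps$. Hence a positive fraction $c=c(d,\eps)>0$ of the vertices of $H$ have degree at most $d-1$ in $H$. Counting vertices of degree $\ge d$ gives $c\ge \eps/(d+\eps)$ or similar.

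Among these low-degree vertices (degree $\le d-1$), the induced subgraph has maximum degree at most $d-1$. So it contains an independent set $I_1$ of size at least $c|V(H)|/d$. Remove $I_1$ and repeat on $G-I_1$, which still has $\mad\le d-\eps$. This partitions $V(G)$ into independent layers $I_1,\dots,I_L$ with $L=O_{d,\eps}(\log n)$. Each vertex of $I_j$ has at most $d-1$ neighbours in $\bigcup_{i\ge j}I_i$.

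\textbf{Recoloring by induction on $d$.} The base case $d=1$ means $\mad(G)<1$. Then $G$ is a matching plus isolated vertices, and with $k\ge2$ colours a direct argument gives diameter $O(n)$. For $d=2$ with $k\ge3$, $G$ is a forest whose components have bounded size, again giving $O(n)$.

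For the inductive step, fix target colouring $\beta$ and start colouring $\alpha$. Process layers from the top $I_L$ down to $I_1$. The goal is to make the vertices of the deeper layers agree with $\beta$ while each vertex of $I_j$ only needs to avoid its at most $d-1$ "later" neighbours.

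The key tool is to free one colour on $I_j$. Recolour every vertex of $I_j$ away from colour $k$; this is possible since each vertex has fewer than $k-1$ constrained neighbours among later layers. Then the graph induced on the remaining layers, with colour $k$ reserved, behaves like an instance with $k-1$ colours and degeneracy parameter $d-1$. To make this precise, view $G-I_j$ together with suitable list restrictions, giving $\mad\le d-1-\eps'$ with $k-1$ colours. The induction hypothesis then recolours it in $O(n(\log n)^{d-2})$ steps.

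Each vertex of $I_j$ is recoloured $O(1)$ times per sub-call. There are $L=O(\log n)$ layers, giving total cost $O(n(\log n)^{d-1})$. A cruder version is to charge each layer's induction call on the subgraph of lower layers, with layer sizes decaying geometrically. The geometric decay keeps the sum at $n\cdot$polylog.

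\textbf{Main obstacle.} The delicate step is justifying that reserving one colour on a layer really reduces $d$ by one in the inductive hypothesis. Removing a layer does not lower $\mad$, so the induction has to be phrased for list colourings. We would set up the statement with lists $L(v)$ where $|L(v)|\ge \deg^{+}(v)+2$ in a layered orientation, in the spirit of the Bousquet--Heinrich list lemma. We would then verify that the inductive invariant is preserved when one colour is frozen on a layer. Getting the step count to multiply by only $O(\log n)$ per level, rather than by $O(n)$, requires that each vertex moves $O(1)$ times per layer pass. This bookkeeping is where we expect most of the work to lie.
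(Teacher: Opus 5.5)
Your layering step is sound: it is a weaker-constant form of Lemma~\ref{lem:layers}. (Also, $\mad(G)\le 1-\eps$ forces $G$ to be edgeless rather than a matching, which is harmless.) The gap is in the recolouring induction, where two steps fail.

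First, you cannot in general ``recolour every vertex of $I_j$ away from colour $k$''. A vertex of $I_j$ has at most $d-1$ \emph{later} neighbours but arbitrarily many \emph{earlier} ones. For instance, the centre of a star is placed after all but at most $d-1$ of its leaves, and those earlier leaves may already use every colour other than $k$. So no free colour need exist. Freeing a colour on a layer requires first clearing colours from all earlier layers, which is itself a recursive problem. This is the content of Lemma~\ref{lem:remove-colors}, and the paper's bound for it is $P_{s,1}t^{s}$ recolourings per vertex, not $O(1)$.

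Second, freezing colour $k$ on one layer and passing to $G-I_j$ does not produce an instance with parameter $d-1$. Deleting a single layer lowers neither $\mad$ nor, in general, the number of later neighbours of the remaining vertices. So the induction hypothesis has nothing to apply to, and the count ``$L$ layers times $O(n(\log n)^{d-2})$'' is unsupported. What you defer as bookkeeping is the mathematical core of the theorem.

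The missing idea is to fix the layered partition once and induct on its forward degree $s$, with the number $t$ of layers never increasing. The statement to prove is $\diam R_{s+2}(G)=O_s(nt^{s})$ for any graph with an $s$-degree partition into $t$ sets; this is Theorem~\ref{thm:depth} with $a=1$, i.e.\ Feghali's Lemma~2. The step $s\to s-1$ works as follows.
\begin{enumerate}
\item Remove a colour $c$ from both colourings using Lemma~\ref{lem:remove-colors}.
\item Scan $V_t,V_{t-1},\ldots,V_1$ and give $c$ to each vertex none of whose later neighbours currently has $c$.
\end{enumerate}
Since both colourings are $c$-free before the scan, this rule depends only on the partition. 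Hence both colourings end with the \emph{same} colour class $S$, and every vertex outside $S$ has a later neighbour in $S$. Thus the sets $V_i\setminus S$ form an $(s-1)$-degree partition of $G-S$ with at most $t$ sets. The colourings on $G-S$ use $s+1$ colours, and $S$ stays fixed, so the recursion closes.

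The same greedy device, applied to the earlier layers $V_1\cup\cdots\cup V_{h-1}$, is what drives the colour-removal lemma. That is where your intuition of ``one lower-level call per layer'' is actually realised, and it produces the factor $t$ per level. With $s=d-1$ and $t=O_{d,\eps}(\log n)$ this gives the bound for $k=d+1$. The paper frees $a=k-d$ colours at once, which covers all $k\ge d+1$ with the smaller exponent $\lfloor(d-1)/(k-d)\rfloor$.
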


The exponent in Theorem~\ref{thm:feghali} does not decrease when $k>d+1$. 
Our main theorem records this dependence. 

\begin{theorem}\label{thm:main}
For all integers \(d \geq 1\) and \(k \geq d+1\), there is a constant \(C_{d,k} > 0\) such that, for every \(0 < \varepsilon \leq 1\), every graph \(G\) on \(n \geq 1\) vertices with \(\operatorname{mad}(G) \leq d - \varepsilon\) satisfies 
\begin{equation}\label{eq:main-explicit}
 \diam R_k(G)\le C_{d,k}n
 \left(1+\frac{(d+\eps+1)^2}{8\eps}\log n\right)^{\left\lfloor\frac{d-1}{k-d}\right\rfloor}.
\end{equation}
Consequently, for fixed $d$, $k$, and $\eps$,
\begin{equation}\label{eq:main-asymptotic}
 \diam R_k(G)=O_{d,k,\eps}\!\left(
 n(\log n)^{\left\lfloor(d-1)/(k-d)\right\rfloor}
 \right).
\end{equation}
\end{theorem}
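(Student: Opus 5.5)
We follow Feghali's layering method, but strip $k-d$ colors at each recursive stage instead of one. The first step is a layering lemma. If $\mad(G)\le d-\eps$, the vertices of degree at most $d-1$ form a $\Theta(\eps n/(d+\eps+1)^2)$-fraction of $V(G)$. Greedily, we can therefore extract a large independent set of such low-degree vertices, delete it, and repeat, since the hypothesis is hereditary. This partitions $V(G)$ into independent layers $L_1,\dots,L_s$ in which each vertex of $L_i$ has at most $d-1$ neighbours in $L_{i}\cup\dots\cup L_s$. The number of layers is
\[
s\le 1+\frac{(d+\eps+1)^2}{8\eps}\log n .
\]
A Markov/averaging count gives the constant: a $\frac{8\eps}{(d+\eps+1)^2}$-fraction shrinks geometrically. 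This is where the base of the power in \eqref{eq:main-explicit} comes from.

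Next comes recoloring to a canonical form. We fix a target coloring $\beta$ and show that any $\alpha$ reaches $\beta$ in the claimed number of steps. We argue by induction on $t=\lfloor (d-1)/(k-d)\rfloor$, or more precisely on the number of colors exceeding the forward degree. Process the layers from the last to the first, using the backward degeneracy order. A vertex in $L_i$ sees at most $d-1$ later neighbours but $k\ge d+1$ colors, so at least $k-d+1\ge 2$ colors remain available relative to later layers. The stage operation is as follows. Using the Bousquet--Heinrich list coloring lemma, which treats several colors simultaneously, we recolor so that a fixed set $S$ of $k-d$ colors disappears from the graph. This is possible because every vertex has at most $d-1$ constraining neighbours among later layers and $k$ colors in total. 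Each layer is independent, so it can be recolored in one parallel-like sweep costing $O(|L_i|)$ moves. Each move, however, may trigger cascading recolorings in earlier layers, and that cascade is what is handled recursively.

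For the recursion, once the colors in $S$ are absent, consider the vertices that must eventually take colors of $S$ (under $\beta$). In the remaining palette of $k-(k-d)=d$ colors the graph behaves like the previous level with parameters $d'=d-(k-d)$ and the same gap $k-d$. This holds because the layering guarantees that forward degrees drop by $k-d$ when the $S$-colored classes are frozen as independent sets. Each stage multiplies the cost by the number of layers $s$, since each layer must be revisited once per outer layer. After $\lfloor (d-1)/(k-d)\rfloor$ stages the forward degree becomes less than $k-d$, that is, small enough that the Bousquet--Perarnau linear regime ($k\ge 2d'+2$, or simply greedy recoloring with a spare color) gives an $O_{d,k}(n)$ bound. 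Multiplying gives $C_{d,k}\,n\,s^{\lfloor (d-1)/(k-d)\rfloor}$, which is \eqref{eq:main-explicit}. The asymptotic form \eqref{eq:main-asymptotic} follows immediately by substituting the bound on $s$.

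The main obstacle is the stage step. We must show that removing $k-d$ colors simultaneously costs only a factor $s$ in total, rather than $s^{k-d}$ as it would if the colors were removed one at a time as in Feghali's argument. We would first try to apply the Bousquet--Heinrich lemma layer by layer, with lists given by the colors avoided by later neighbours. Each vertex is recolored at most $O_{d,k}(1)$ times per pass over layers, and the total work per stage is then $O(n)$ times the recursive cost. Verifying that the list sizes stay large enough throughout ($k-(d-1)\ge k-d+1$) is the delicate point.
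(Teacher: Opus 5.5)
\textbf{There is a genuine gap.} Your architecture matches the paper's:
\begin{itemize}
\item logarithmically many independent layers of forward degree at most $d-1$ (your greedy layering works, and any $d$-dependent loss in the coefficient of $\log n$ is absorbed into $C_{d,k}$);
\item a recursion that lowers the forward degree by $a=k-d$ per level, at the cost of one factor of the number $t$ of layers;
\item a base case $r<a$, where the palette of $r+a+1\le 2a$ colors lets Lemma~\ref{lem:BH} give a linear bound.
\end{itemize}
However, both steps that carry the argument are missing or wrong. Write $X$ for the set of $a$ colors to be removed (your $S$).

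\textbf{First gap: removing $X$.} You invoke Bousquet--Heinrich to make $X$ disappear from the whole graph. That lemma needs the union of the lists to contain at most $2a$ colors. With $k=d+a$ colors this forces $k\ge 2d$, which is exactly the case where the exponent is already $0$. An $X$-free coloring exists trivially; reaching it cheaply is the problem, and your last paragraph leaves this open. The obstruction is not list sizes. Moving a vertex of layer $V_h$ to a color $c$ requires first clearing $c$ from the earlier layers, and that cascade is again a problem of forward degree $d-1$ unless something lowers it.

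The paper's Lemma~\ref{lem:remove-colors} supplies this, for a prefix of forward degree $r$ with $r+a+1$ colors:
\begin{itemize}
\item At the first layer $V_h$ carrying a color of $X$, it groups the offending vertices by a target color $c\notin X$ missing from their later neighbours. It picks $Y\ni c$ with $|Y|=a$ and $Y\cap X=\emptyset$.
\item It greedily recolors $V_{h-1},\dots,V_1$ with colors of $X$. Every vertex that gets no such color then sees all of $X$ on its later neighbours, so it has at most $r-a$ later neighbours outside the $X$-colored vertices.
\item It recursively clears $Y$ there with palette $C\setminus X$ and moves the group to $c$.
\item It then saturates symmetrically with $Y$ and recursively clears $X$ with palette $C\setminus Y$.
\end{itemize}
There are at most $t$ stages, at most $r+1$ groups per stage, and two calls at level $\lfloor r/a\rfloor-1$ per group. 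This costs $P_{r,a}t^{\lfloor r/a\rfloor}$ recolorings per vertex.

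\textbf{Second gap: the recursion.} You freeze the $X$-colored classes of the target $\beta$, and this does not lower forward degrees. Take $d=3$, $k=4$, $X=\{4\}$, $G=K_3$ (so $\mad(G)=2\le 3-\eps$), and two distinct colorings $\alpha,\beta$ that both avoid color $4$. Then $\beta^{-1}(X)=\emptyset$, and your residual problem is to recolor a triangle with $3$ colors, where every coloring is frozen.

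The missing idea is a canonical saturation:
\begin{itemize}
\item After clearing $X$ from both endpoints, process $V_t,\dots,V_1$ and give each vertex the first color of $X$ absent from its later neighbours, if there is one.
\item Every vertex outside the resulting set $S$ sees all $a$ colors of $X$ among its later neighbours. So $G-S$ inherits a $(d-1-a)$-degree partition with at most $t$ sets, and palette $C\setminus X$ of size $d=(d-1-a)+a+1$.
\item Each decision depends only on the $X$-colors of later neighbours. Hence both endpoints produce the same $S$ with the same colors, and the induction connects two colorings of the same graph $G-S$.
\end{itemize}
The same saturation is what lowers the forward degree inside the removal step.
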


For $k=d+1$, the exponent in \eqref{eq:main-asymptotic} is $d-1$, as in Theorem~\ref{thm:feghali}. 
For $k=d+2$, it is $\lfloor(d-1)/2\rfloor$. 
The bound is $O(n\log n)$ when $k\ge d+\lfloor(d+1)/2\rfloor$, and it is linear when $k\ge2d$. 
The last range also follows from the theorem of Bousquet and Perarnau~\cite{BousquetPerarnau2016}, because $\mad(G)<d$ implies that $G$ is $(d-1)$-degenerate.
The progression is summarized below.

\begin{center}
\begin{tabular}{@{}ll@{}}
\toprule
number of colors & bound supplied by Theorem~\ref{thm:main} \\
\midrule
$k=d+1$ & $O(n(\log n)^{d-1})$ \\
$k=d+2$ & $O(n(\log n)^{\lfloor(d-1)/2\rfloor})$ \\
$k\ge d+\lfloor(d+1)/2\rfloor$ & $O(n\log n)$ \\
$k\ge2d$ & $O(n)$ \\
\bottomrule
\end{tabular}
\end{center}


For certain parameter ranges, stronger bounds are known even
for list coloring; see, for example,
\cite{Cranston2022Sparse,PanWangLiu2026}.
However, Theorem~\ref{thm:main} provides a bound that applies to all admissible values of $d$, $k$, and $\eps$. 
Our proof combines Feghali's layered partition with the list coloring lemma of Bousquet and Heinrich. 
It relies on a structural bound for partitions into independent sets, where each vertex has few neighbors in later sets. This bound is stated and proved at the start of Section~\ref{sec:3}.

If $\mad(G)<2$, then $G$ is a forest. A recent preprint of Asgarli, Krehbiel, MacLean, and Zaimi~\cite{AsgarliEtAl2026} determines the largest diameter of $R_3(T)$ among trees of a fixed order. It yields the following sharp improvement over Theorem~\ref{thm:main}.

\begin{theorem}\label{thm:forest}
Let $0<\eps<2$, and let $M=\lfloor2/\eps\rfloor$. If $G$ is a graph on $n\ge1$ vertices and $\mad(G)\le2-\eps$, then
\begin{equation}\label{eq:forest-bound}
 \diam R_3(G)\le \rho_M n,
\end{equation}
where
\[
 \rho_M=
 \begin{cases}
  1, & M=1,\\[2pt]
  \dfrac32, & 2\le M\le5,\\[5pt]
  \dfrac{11}{6}, & M=6,\\[6pt]
  \dfrac M4+\dfrac{3}{4M}, & M\ge7 \text{ and } M \text{ is odd},\\[8pt]
  \dfrac M4+\dfrac2M, & M\ge8 \text{ and } M \text{ is even}.
 \end{cases}
\]
Moreover, for every fixed $\eps$, equality in \eqref{eq:forest-bound} holds for infinitely many values of $n$.
\end{theorem}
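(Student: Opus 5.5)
Since $\mad(G)\le 2-\eps<2$, the graph $G$ is a forest. Let $T_1,\dots,T_c$ be its components, with $m_i=|V(T_i)|$. The plan is to reduce everything to the per-tree extremal values $D_m$ from Asgarli, Krehbiel, MacLean, and Zaimi~\cite{AsgarliEtAl2026}.

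\textbf{Step 1 (component sizes).} Apply the $\mad$ condition to a single component $T_i$: its average degree is $2(m_i-1)/m_i\le 2-\eps$, so $2/m_i\ge\eps$, i.e.\ $m_i\le 2/\eps$. Hence $m_i\le M=\lfloor 2/\eps\rfloor$.

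\textbf{Step 2 (product structure).} A proper $3$-coloring of $G$ is a tuple of proper $3$-colorings of the components. A single recoloring step changes one vertex, which lies in one component and does not affect whether any other component is properly colored. So $R_3(G)$ is the Cartesian product of the graphs $R_3(T_i)$. Distances in a Cartesian product add, hence
\[
\diam R_3(G)=\sum_i \diam R_3(T_i)\le \sum_i D_{m_i}\le \sum_i \rho_M m_i=\rho_M n,
\]
using $D_m\le \rho_M m$ for every $m\le M$. This proves the inequality with $\rho_M=\max_{m\le M}D_m/m$.

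\textbf{Step 3 (evaluate $\rho_M$ and extremality).} Here I take the explicit values of $D_m$ from~\cite{AsgarliEtAl2026}. For small $m$ these should be $D_1=1$, $D_2=3$, and so on; for large $m$ they have the form $\lfloor m^2/4\rfloor+$const, roughly quadratic in $m$. Computing the maximum of $D_m/m$ over $1\le m\le M$ should then give the stated piecewise formula. I expect the maximum to be attained at $m=M$ for $M\ge 6$ (or at a small $m$ for $M\le5$), and I would check $D_m/m$ against the piecewise values.

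For tightness, fix $m^*\le M$ attaining $\rho_M$ and let $T^*$ be an extremal tree on $m^*$ vertices, so $\diam R_3(T^*)=D_{m^*}$. Take $G$ to be the disjoint union of $t$ copies of $T^*$, with $n=tm^*$. Every subgraph of a forest whose components have at most $m^*$ vertices has average degree at most $2-2/m^*\le 2-\eps$, because $m^*\le 2/\eps$. So $G$ satisfies the hypothesis, and by Step 2 equality holds for these infinitely many $n$.

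\textbf{Main obstacle.} The only delicate part is the case analysis in Step 3 producing the exact piecewise formula for $\rho_M$ from the known $D_m$, in particular the parity-dependent constant and the small cases $M\le 6$. The structural part in Steps 1 and 2 is routine.
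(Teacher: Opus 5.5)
Your Steps 1 and 2 and your extremal construction are the paper's argument. Components have at most $M$ vertices, and distances in $R_3(G)$ add over components, so $\diam R_3(G)\le\rho_M n$ with $\rho_M=\max_{1\le m\le M}D_m/m$. Disjoint copies of an extremal tree on $m_*\le M$ vertices then attain equality. Your uniform treatment of Step 1 via $2(m_i-1)/m_i\le 2-\eps$ is fine, including the case $m_i=1$.

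\textbf{What is missing: Step 3.} You describe it only as an expectation. The piecewise formula is part of the statement, so it must be derived. It also cannot be derived from ``$\lfloor m^2/4\rfloor+\mathrm{const}$''.

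\emph{The values.} Asgarli et al.\ give $(D_1,\dots,D_6)=(1,3,4,6,7,11)$. For $m\ge7$ they give $D_m=(m^2+3)/4$ when $m$ is odd and $D_m=m^2/4+2$ when $m$ is even. So the constant added to $\lfloor m^2/4\rfloor$ is $1$ for odd $m$ and $2$ for even $m$. This parity dependence is exactly what produces the two different formulas $M/4+3/(4M)$ and $M/4+2/M$.

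\emph{Small $M$.} Write $r_m=D_m/m$. Then $(r_1,\dots,r_6)=(1,\tfrac{3}{2},\tfrac{4}{3},\tfrac{3}{2},\tfrac{7}{5},\tfrac{11}{6})$, which settles $M\le6$.

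\emph{$M\ge7$.} You need three checks.
\begin{enumerate}
\item Within each parity class the ratios increase: $r_{m+2}-r_m=\tfrac{1}{2}-\tfrac{3}{2m(m+2)}>0$ for odd $m\ge7$, and $r_{m+2}-r_m=\tfrac{1}{2}-\tfrac{4}{m(m+2)}>0$ for even $m\ge8$.
\item The top value beats its neighbor: $r_M-r_{M-1}=\tfrac{M^2-6M-3}{4M(M-1)}>0$ for odd $M$, and $r_M-r_{M-1}=\tfrac{M^2+4M-8}{4M(M-1)}>0$ for even $M$.
\item The small cases are dominated: $r_7=\tfrac{13}{7}>\tfrac{11}{6}=\max_{m\le6}r_m$.
\end{enumerate}
Together these show that the maximum of $r_m$ over $m\le M$ is $r_M$, which is the stated formula. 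With this computation supplied, your proof coincides with the paper's.
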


\section{Preliminaries}\label{sec:2}

For a graph $H$, let $N_H(v)$ denote the set of neighbors of $v$, and $d_H(v)=|N_H(v)|$. 
For $U\subseteq V(H)$, the graph induced by $U$ is denoted by $H[U]$, and $H-U=H[V(H)\setminus U]$. 
If $\varphi$ is a coloring and $U$ is a set of vertices, then $\varphi(U)=\{\varphi(u):u\in U\}$; the restriction of $\varphi$ to an induced subgraph $F$ is denoted by $\varphi|_F$. 
The girth of a graph containing a cycle is the length of its shortest cycle.

A list assignment $L$ on a graph $H$ assigns a set $L(v)$ of colors to every $v\in V(H)$. 
A proper coloring $\varphi$ is an $L$-coloring if $\varphi(v)\in L(v)$ for every vertex. 
An $L$-recoloring sequence is a sequence of proper $L$-colorings in which consecutive colorings differ on exactly one vertex.

Let $V_1,\ldots,V_t$ be a partition of $V(G)$. If $v\in V_i$, a neighbor of $v$ in $V_j$ with $j>i$ is called a later neighbor of $v$.

\begin{definition}\label{def:partition}
The partition $V_1,\ldots,V_t$ is an \emph{$s$-degree partition} of $G$ if every $V_i$ is independent and every vertex has at most $s$ later neighbors. For $1\le b\le t$, the induced graph
\[
 F=G[V_1\cup\cdots\cup V_b]
\]
is called a layered prefix. Its forward degree is at most $r$ if every vertex in $V_i$, where $i\le b$, has at most $r$ neighbors in $V_{i+1}\cup\cdots\cup V_t$.
\end{definition}

Definition~\ref{def:partition} implies that an ordering by increasing set index has at most $s$ later neighbors at every vertex. Conversely, the definition agrees with the recursive construction in which $V_i$ is an independent set of vertices of degree at most $s$ after $V_1,\ldots,V_{i-1}$ have been deleted.

We use the following bound, which is established in the proof
of Lemma 8 of \cite{BousquetHeinrich2022}.

\begin{lemma}[Bousquet and Heinrich~\cite{BousquetHeinrich2022}]\label{lem:BH}
Let $a\ge1$, let $u_1,\ldots,u_m$ be an ordering of the vertices of a graph $H$, and let $L$ be a list assignment such that
\[
 |L(u_i)|\ge
 \bigl|N_H(u_i)\cap\{u_{i+1},\ldots,u_m\}\bigr|+a+1
\]
for every $i$. If the union of all lists contains $p\le2a$ colors, then any two $L$-colorings can be transformed into one another by an $L$-recoloring sequence that recolors each vertex at most $p$ times.
\end{lemma}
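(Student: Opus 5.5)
We propose to prove Lemma~\ref{lem:BH} by induction on $m$, peeling off the \emph{first} vertex $u_1$ of the ordering. The reason for this choice is that every neighbor of $u_1$ is a later neighbor, so $d_H(u_1)\le |L(u_1)|-a-1$. Moreover, $H'=H-u_1$ with the ordering $u_2,\ldots,u_m$ and the restricted lists still satisfies the hypothesis, since deleting $u_1$ only lowers later-degrees. Its union of lists has at most $p\le 2a$ colors. For two $L$-colorings $\alpha,\beta$ of $H$, induction supplies an $L$-recoloring sequence $S'$ from $\alpha|_{H'}$ to $\beta|_{H'}$ that recolors every vertex of $H'$ at most $p$ times. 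It remains to lift $S'$ to $H$ by inserting recolorings of $u_1$, while recoloring $u_1$ at most $p$ times in total, including the final adjustment to $\beta(u_1)$.

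The lifting rule is the following. We keep $u_1$ fixed until the next step of $S'$ recolors some neighbor $w$ of $u_1$ to the current color $c$ of $u_1$. Just before that step, we recolor $u_1$ to a color $c'\in L(u_1)$ that is absent from $N_H(u_1)$ and differs from $c$ and from the new color of $w$. Since $u_1$ has at most $|L(u_1)|-a-1$ neighbors, at least $a$ admissible colors remain. Among them we pick, in the spirit of Belady's farthest-in-future rule, the color $c'$ whose next appearance on $N_H(u_1)$ in the remainder of $S'$ occurs latest, or never. At the end of $S'$, we recolor $u_1$ to $\beta(u_1)$; this is proper because $\beta$ is proper. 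Every inserted step is a proper $L$-recoloring by construction.

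The crux is to bound the number of forced recolorings of $u_1$ by $p-1$. The plan is to partition the time line into phases, where a phase is a maximal interval during which $u_1$ keeps one color. By the farthest-in-future choice, whenever $u_1$ is forced to leave $c'$, each of the at least $a-1$ other colors that were admissible at the moment $c'$ was chosen has already appeared on $N_H(u_1)$ during that phase. We will then charge the phases to colors of the $p$-element palette. The aim is to show that a color used by $u_1$ in one phase, or "burned" in it, cannot serve as a fresh admissible color in too many later phases, and to combine this with $p\le 2a$ so that the number of phases is at most $p$.

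The hard part is this final counting step: neighbors' colors become free again over time, so a naive charging argument fails. If the greedy rule cannot be made to give exactly $p$, our fallback is to strengthen the induction hypothesis as in \cite{BousquetHeinrich2022}. We would then process colors one at a time, recoloring all vertices away from a fixed color $c$ and toward their $\beta$-value. Each vertex is then touched at most once per color, which yields the bound $p$ directly, and the condition $p\le 2a$ guarantees that a free color outside the two "active" colors always exists for each vertex.
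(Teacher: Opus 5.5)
\textbf{Verdict: genuine gap, but a fillable one.} Your setup is sound. Peeling $u_1$ is the right move, the hypothesis passes to $H-u_1$, and the lifting rule gives a proper $L$-recoloring sequence. At each forced move at least $|L(u_1)|-d_H(u_1)-1\ge a$ admissible colors exist, and your farthest-in-future observation is correct. (The paper gives no proof of its own; it defers to Bousquet--Heinrich.)

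The gap is exactly the step you flag as the crux: you never bound the number $f$ of forced recolorings of $u_1$. The bookkeeping you propose for it, charging phases to colors of the palette, is the wrong accounting. Your worry that neighbors' colors become free again is a symptom of that choice. As written, the proposal therefore does not prove the lemma. The fallback of processing ``one color at a time'' is only a sketch; it does not say what happens when a vertex's $\beta$-value is blocked, so it does not fill the gap either.

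The missing idea is to charge phases to \emph{recoloring steps of neighbors} of $u_1$. The induction hypothesis already controls how many such steps there are. Let $d=d_H(u_1)$. Then $d\le |L(u_1)|-a-1\le p-a-1\le a-1$ because $p\le 2a$, and also $p\ge |L(u_1)|\ge a+1$. By induction, each neighbor is recolored at most $p$ times in $S'$, so $S'$ has at most $pd\le p(a-1)$ steps that recolor a neighbor of $u_1$.

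Now count such steps phase by phase:
\begin{itemize}
\item The first phase contains at least one of them, namely the forcing step.
\item Every later phase that ends with a forced move contains at least $a$ of them. By your farthest-in-future observation, each of the at least $a-1$ other admissible colors is taken by some neighbor strictly inside the phase. These are distinct steps, since one step creates one color, and the forcing step comes after them.
\end{itemize}
Phases are disjoint, so if $f\ge1$ then $1+a(f-1)\le p(a-1)$. Hence $f-1\le p-\frac{p+1}{a}<p-1$, where the strict inequality uses $p\ge a+1$. So $f\le p-1$, and adding the final adjustment to $\beta(u_1)$, $u_1$ is recolored at most $p$ times. Whether neighbors' colors become free again plays no role in this count.

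Belady's rule is not even needed. The simpler Bousquet--Perarnau rule, ``pick an admissible color not used by the next $a-1$ neighbor recolorings,'' guarantees the same $a$ steps per phase.
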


We will apply Lemma~\ref{lem:BH} while the vertices outside an induced subgraph are fixed. Let $C$ be the full color set, let $F$ be an induced subgraph of $G$, and let $\varphi$ be a proper $C$-coloring of $G$. For $v\in V(F)$, define
\begin{equation}\label{eq:restricted-lists}
 L_{F,\varphi}(v)=C\setminus\varphi\bigl(N_G(v)\setminus V(F)\bigr).
\end{equation}
Every $L_{F,\varphi}$-recoloring sequence in $F$ remains a proper recoloring sequence in $G$ if all vertices outside $F$ keep their colors.

The next lemma is the main step. It removes a prescribed set of $a$ colors from a layered prefix. Its exponent records how many times the forward degree can be decreased by $a$.

\begin{lemma}\label{lem:remove-colors}
Let $r\ge0$, $a\ge1$, and $t\ge1$ be integers, and let $C$ be a set of $r+a+1$ colors. 
Let $V_1,\ldots,V_t$ be a partition of $V(G)$ into independent sets. 
Suppose that $F=G[V_1\cup\cdots\cup V_b]$ is a layered prefix whose forward degree is at most $r$. 
For every set $X\subseteq C$ with $|X|=a$ and every proper $C$-coloring $\varphi$ of $G$, there is a recoloring sequence starting from $\varphi$ that uses only colors in $C$, fixes every vertex of $V(G)\setminus V(F)$, and ends at a coloring in which no vertex of $F$ has a color in $X$.
Moreover, each vertex is recolored at most $P_{r,a}t^{\lfloor r/a\rfloor}$ times, where $P_{r,a}$ depends only on $r$ and $a$.
\end{lemma}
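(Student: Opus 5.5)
The plan is induction on $q=\lfloor r/a\rfloor$, with $a$ and the color set $C$ (of size $r+a+1$) tied to $r$ as in the statement. The base case is $q=0$, and the inductive step sweeps through the layers $V_b,V_{b-1},\ldots,V_1$. I write $F_j=G[V_1\cup\cdots\cup V_j]$ for $j\le b$; each $F_j$ is itself a layered prefix of forward degree at most $r$.

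\emph{Base case $r<a$.} Order $V(F)$ by increasing layer index, breaking ties inside each $V_i$ arbitrarily; this is harmless because $V_i$ is independent. Take the lists $L=L_{F,\varphi}$ from \eqref{eq:restricted-lists}. A vertex $v\in V_i$ has at most $r$ neighbors in $V_{i+1}\cup\cdots\cup V_t$. If $o_v$ of them lie outside $F$, then
\[
 |L(v)|\ge r+a+1-o_v\ge \bigl|N_F(v)\cap(V_{i+1}\cup\cdots\cup V_b)\bigr|+a+1 .
\]
The union of the lists lies in $C$, so $p\le r+a+1\le 2a$. A target $L$-coloring avoiding $X$ exists by coloring greedily from $V_b$ down to $V_1$ with the lists $L(v)\setminus X$. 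Each of these lists has at least one more color than the number of already-colored later neighbors, so the greedy choice never gets stuck. Lemma~\ref{lem:BH} then moves $\varphi|_F$ to this target while recoloring each vertex at most $p\le 2a$ times, and the vertices outside $F$ stay fixed. This gives $P_{r,a}=2a$ when $r<a$, which is independent of $t$, as required.

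\emph{Inductive step $r\ge a$.} For $j=b,b-1,\ldots,1$ in turn, I will make $V_j$ free of $X$ while keeping $V_{j+1},\ldots,V_b$ free of $X$. Each step recolors $V_j$ once, after preparing the earlier layers with a call to the case of forward degree $r-a$; over the $b\le t$ steps this yields the recurrence
\[
 N(r,t)\le t\bigl(N(r-a,t)+1\bigr),\qquad\text{hence}\qquad N(r,t)\le P_{r,a}\,t^{\lfloor r/a\rfloor}.
\]

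To carry out one step, let $v\in V_j$ and list the colors of $C\setminus X$ (there are $r+1$ of them). The forward neighbors of $v$ block at most $r$ of these, so some color $c_v\in C\setminus X$ is free of the forward neighbors. The difficulty is that the backward neighbors of $v$, in $F_{j-1}$, may use $c_v$. The plan is to clear from $F_{j-1}$ a set $W$ of $a$ colors chosen to contain the needed free colors. In the reduced problem I regard the vertices of $V_j\cup\cdots\cup V_t$ as fixed, and I work only with the colors $C\setminus W$, a set of size $(r-a)+a+1$. The vertices of $F_{j-1}$ never take colors in $W$ after they are cleared, so a forward neighbor colored in $W$ no longer constrains them. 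Consequently their effective forward degree with respect to $C\setminus W$ should drop to $r-a$, and the induction hypothesis for $\lfloor (r-a)/a\rfloor=q-1$ applies.

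\emph{Main obstacle.} The step I expect to be hardest is choosing $W$ so that this reduction is literally valid.
\begin{itemize}
 \item The statement of the lemma counts forward neighbors regardless of their colors, and a single $W$ must serve every $v\in V_j$ at once.
 \item The clearing of $W$ from $F_{j-1}$ must also never create a color in $X$ on a vertex of $V_{j+1}\cup\cdots\cup V_b$. This part is automatic, because those vertices are held fixed during the clearing.
\end{itemize}
To handle the first point, I would strengthen the induction hypothesis to a list version. Instead of the full set $C$, each vertex gets the list $L_{F,\varphi}(v)$, and the condition on $C$ becomes $|L(v)|\ge(\text{number of later neighbors inside }F)+a+1$, exactly as in Lemma~\ref{lem:BH}. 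In that form, removing $W$ from all lists lowers both sides consistently, and the vertices of $V_j$ can then be recolored simultaneously, since $V_j$ is independent, to colors outside $X$. If this generalization turns out to be awkward, my fallback is to split $V_j$ into at most $\binom{r+a+1}{a}$ classes according to the set of colors appearing on the forward neighbors of each vertex. I would then run the clearing argument once per class with a class-specific $W$; this costs only a factor depending on $r$ and $a$, which can be absorbed into $P_{r,a}$.
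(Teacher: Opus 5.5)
Your base case is correct and matches the paper's. The inductive step, however, has a genuine gap at exactly the point you call the main obstacle: nothing in your construction lowers the forward degree from $r$ to $r-a$.

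As set up, the reduction is circular. Working with the colors $C\setminus W$ presupposes that $W$ has already been cleared from $F_{j-1}$. But clearing $W$ from $F_{j-1}$ while $V_j\cup\cdots\cup V_t$ is fixed is the lemma again, with the same $r$ and the same $C$. Neighbors colored in $W$ ``no longer constrain'' only if every vertex of $F_{j-1}$ has $a$ distinct, fixed later neighbors colored in $W$, and nothing provides this. For instance, a vertex $u\in V_{j-1}$ colored in $W$ may have $r$ fixed later neighbors in $V_j\cup\cdots\cup V_t$ with distinct colors of $C\setminus W$. It then still faces $r$ constraints on the $r+1$ colors of $C\setminus W$.

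The list version does not repair this. Deleting $W$ from $L(v)$ lowers $|L(v)|$ by up to $a$ but leaves the number $\ell(v)$ of later neighbors unchanged, so $|L(v)|\ge\ell(v)+a+1$ only becomes $|L(v)\setminus W|\ge\ell(v)+1$. The class-splitting fallback handles a different issue, namely that the colors $c_v$ can number $r+1>a$. Without a real drop in $r$, you can only recurse on shorter prefixes with the same $r$. The cost then compounds from layer to layer instead of giving $P_{r,a}t^{\lfloor r/a\rfloor}$.

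The missing idea is to saturate with the colors of $X$ before clearing, and this forces a bottom-up order. The paper takes the smallest $h$ such that $V_h$ contains a color of $X$, so $U=V_1\cup\cdots\cup V_{h-1}$ is free of $X$. For each $c\in C\setminus X$, let $W_c$ be the set of vertices of $V_h$ colored in $X$ for which $c$ is the first color of $C\setminus X$ missing from their later neighbors. To move $W_c$ to $c$, the paper clears from $U$ an $a$-set $Y\subseteq C\setminus X$ containing $c$, as follows.

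It first processes $V_{h-1},\ldots,V_1$ and gives each vertex the first color of $X$ missing from its later neighbors, whenever one exists. Let $S$ be the set of vertices then colored in $X$. Every $u\in U\setminus S$ sees all $a$ colors of $X$ on $a$ distinct later neighbors in $S$, whose colors no longer change. In $G-S$ every vertex is colored from $C\setminus X$, which has $(r-a)+a+1$ colors. The prefix on $U\setminus S$ has forward degree at most $r-a$, so the induction hypothesis with prescribed set $Y$ applies. After $W_c$ is recolored to $c$, a symmetric sweep with $Y$, followed by a second call with color set $C\setminus Y$ and prescribed set $X$, removes the colors of $X$ that the first sweep placed on $U$.

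The sweep is proper only because $U$ carries no color of $X$ beforehand. In your top-down order, $F_{j-1}$ still carries arbitrary colors of $X$. A color of $X$ can then be blocked by an earlier neighbor, and such a block does not lower the forward degree. You therefore need to process the layers bottom-up, as the paper does. The resulting count is $N(r,t)\le(r+1)t\bigl(2N(r-a,t)+3\bigr)$ rather than your $t\bigl(N(r-a,t)+1\bigr)$.
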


\begin{proof}
Fix a linear order on $C$ and, for each $1\le i\le t$, a linear
order on $V_i$. Every subset of $C$ is equipped with the induced
order. For fixed $a$, we prove the assertion simultaneously for all
integers $r\ge0$ by induction on $p=\lfloor r/a\rfloor$.

First suppose that $p=0$. Then $0\le r<a$. Keep every vertex outside
$F$ fixed and use the lists in \eqref{eq:restricted-lists}. Order the
vertices of $F$ by increasing set index, using the fixed order within
each set. For $v\in V_i\cap V(F)$, let $\ell(v)$ be the number of
later neighbors of $v$ in $F$, and let $e(v)$ be the number of
neighbors of $v$ outside $F$. Since $F$ is a layered prefix whose
forward degree is at most $r$, we have $\ell(v)+e(v)\le r$. Therefore,
\[
 |L_{F,\varphi}(v)|
 \ge |C|-e(v)
 \ge \ell(v)+a+1.
\]
Moreover, $|C|=r+a+1\le2a$.

There is an $L_{F,\varphi}$-coloring of $F$ that avoids $X$. Indeed,
color the sets from $V_b$ down to $V_1$, using the fixed order within
each set. When a vertex $v$ is considered, only its later neighbors
in $F$ and its fixed neighbors outside $F$ impose restrictions. There
are at most $\ell(v)+e(v)\le r$ such neighbors, whereas
$C\setminus X$ has $r+1$ colors. Hence a color in $C\setminus X$ is
available. The resulting coloring is an $L_{F,\varphi}$-coloring and
avoids $X$. Lemma~\ref{lem:BH} transforms $\varphi|_F$ into this
coloring while recoloring each vertex at most $|C|\le2a$ times. By
the observation following \eqref{eq:restricted-lists}, the same
sequence is valid in $G$. Thus the assertion holds when $p=0$, and
we take $P_{r,a}=2a$ in this case.

Now suppose that $p\ge1$, and assume that the assertion holds for
every integer $r'\ge0$ satisfying $\lfloor r'/a\rfloor<p$. Since
$r\ge a$ and $\lfloor(r-a)/a\rfloor=p-1$, the induction hypothesis
applies with $r-a$ in place of $r$.

We describe one stage of the construction. Let $\gamma$ be the
current coloring, with $\gamma=\varphi$ at the first stage. If no
color of $X$ occurs on $F$, there is nothing to prove. Otherwise,
let $h$ be the smallest index such that a vertex of $V_h$ has a
color in $X$, and write
\[
 U=V_1\cup\cdots\cup V_{h-1}
 \quad\text{and}\quad
 W=\{w\in V_h:\gamma(w)\in X\}.
\]
By the choice of $h$, no color of $X$ occurs on $U$.

Each $w\in W$ has at most $r$ later neighbors, whereas
$C\setminus X$ has $r+1$ colors. Let $c(w)$ be the first color of
$C\setminus X$ that is absent from the later neighbors of $w$. For
each $c\in C\setminus X$, define
\[
 W_c=\{w\in W:c(w)=c\}.
\]
The nonempty sets $W_c$ partition $W$, and at most $r+1$ of them are
nonempty. If $U=\emptyset$, recolor each $w\in W$ with $c(w)$.
These recolorings are proper because $V_h$ is independent and $c(w)$
is absent from every later neighbor of $w$. Thus no color of $X$
remains on $V_h$, and this stage is complete. Hence we may assume
that $U\ne\emptyset$.

Treat the nonempty sets $W_c$ in the order induced by the fixed order
on $C$. Before each set is treated, we maintain the following
properties:
\begin{enumerate}[label=\textup{(\roman*)}]
\item no vertex of $U$ or of an already treated set $W_{c'}$ has a
color in $X$;
\item every vertex of an untreated set $W_{c'}$ still has a color in
$X$;
\item no vertex of $V_{h+1}\cup\cdots\cup V_t$ has been recolored
during the present stage.
\end{enumerate}
These properties hold before the first set is treated. Fix a color
$c$ for which $W_c$ is the next set to be treated. Since $r\ge a$,
choose a set $Y\subseteq C\setminus X$ of size $a$ that contains
$c$; for definiteness, let $Y$ consist of $c$ together with the first
$a-1$ colors of $(C\setminus X)\setminus\{c\}$.

We remove all colors of $Y$ from $U$ in two steps. 
First, process the sets $V_{h-1},\ldots,V_1$ in this order, 
using the fixed vertex order within each set.
When a vertex $u\in U$ is considered, if a color of $X$ is absent
from its later neighbors, recolor $u$ with the first such color; 
otherwise leave $u$ unchanged. Every recoloring is proper.
Indeed, the vertices in the same set are independent, 
the vertices in earlier sets have not yet been processed and still have no color in $X$, 
and the selected color is absent from all later neighbors. 
Let $S$ be the set of vertices of $G$ whose current color belongs to $X$ after this step.

If $u\in U\setminus S$, then every color of $X$ occurs on a later
neighbor of $u$. Since the sets are processed in decreasing index
order, the colors of these later neighbors do not change after $u$
is considered. Thus these $a$ later neighbors are distinct and
belong to $S$. Hence $u$ has at most $r-a$ later neighbors in $G-S$.
With respect to the partition
$V_1\setminus S,\ldots,V_t\setminus S$, after empty sets are omitted,
the graph $(G-S)[U\setminus S]$ is a layered prefix of $G-S$ whose
forward degree is at most $r-a$ and whose partition has at most $t$
sets. Every vertex of $G-S$ has a color in $C\setminus X$, and
\[
 |C\setminus X|=r+1=(r-a)+a+1.
\]

Second, if $U\setminus S=\emptyset$, take the empty recoloring sequence.
Otherwise, apply the induction hypothesis in $G-S$ to the prefix
$(G-S)[U\setminus S]$, with color set $C\setminus X$ and prescribed
set $Y$. When the induction hypothesis is used, it recolors each
vertex at most $P_{r-a,a}t^{p-1}$ times because the new partition has
at most $t$ sets. In either case, all vertices outside the prefix
remain fixed. During this sequence, every vertex of $S$ keeps a
color in $X$, whereas every recolored vertex uses a color in
$C\setminus X$. Hence the sequence is also proper in $G$. At its
end, no color of $Y$ occurs on $U\setminus S$, while every vertex of
$U\cap S$ has a color in $X$. Since $X\cap Y=\emptyset$, no color
of $Y$ occurs on $U$.

We now recolor every vertex of $W_c$ with $c$. No vertex of $U$ has
color $c$. Moreover, by property~\textup{(iii)}, the later neighbors
of every vertex of $W_c$ have kept their colors, so $c$ is still
absent from them. Since $V_h$ is independent, these recolorings are
proper.

It remains to remove the colors of $X$ introduced on $U$. Process
$V_{h-1},\ldots,V_1$ again in the same order. When a vertex $u\in U$
is considered, if a color of $Y$ is absent from its later neighbors,
recolor $u$ with the first such color; otherwise leave $u$ unchanged.
The same argument used for the first procedure shows that every
recoloring is proper, since no color of $Y$ occurs on $U$ before this
procedure begins. Let $T$ be the set of vertices of $G$ whose current
color belongs to $Y$ after this procedure. If $u\in U\setminus T$,
then every color of $Y$ occurs on a later neighbor of $u$. Since the
colors of these later neighbors do not change after $u$ is
considered, the corresponding $a$ neighbors are distinct and belong
to $T$. Thus $u$ has at most $r-a$ later neighbors in $G-T$. With
respect to the partition $V_1\setminus T,\ldots,V_t\setminus T$,
after empty sets are omitted, the graph $(G-T)[U\setminus T]$ is a
layered prefix of $G-T$ whose forward degree is at most $r-a$ and
whose partition has at most $t$ sets.

If $U\setminus T=\emptyset$, take the empty recoloring sequence.
Otherwise, apply the induction hypothesis in $G-T$ to the prefix
$(G-T)[U\setminus T]$, with color set $C\setminus Y$ and prescribed
set $X$. This is allowed because $X\cap Y=\emptyset$ and
\[
 |C\setminus Y|=r+1=(r-a)+a+1.
\]
When the induction hypothesis is used, it recolors each vertex at
most $P_{r-a,a}t^{p-1}$ times because the new partition has at most
$t$ sets. During this sequence, every vertex of $T$ keeps a color in
$Y$, whereas every recolored vertex uses a color in $C\setminus Y$.
Hence the sequence is also proper in $G$. At its end, no color of
$X$ occurs on $U\setminus T$. The vertices of $U\cap T$ have colors
in $Y$, and every vertex of $W_c$ has color $c\in Y$ and remains
fixed. Therefore no color of $X$ occurs on $U\cup W_c$.

The treatment of $W_c$ preserves properties~\textup{(i)--(iii)}:
the vertices of $U$ and of $W_c$ now avoid $X$, all previously
treated sets remain fixed and avoid $X$, every untreated set remains
fixed and still uses colors in $X$, and no vertex in a set after
$V_h$ has been recolored. We may therefore treat the next nonempty
set $W_c$. After all such sets have been treated, no color of $X$
occurs on $U\cup W$. The vertices of $V_h\setminus W$ avoided $X$ at
the beginning of the stage and were never recolored, so no color of
$X$ occurs on $U\cup V_h$. Consequently, the smallest index of a set
containing a color of $X$ strictly increases. Repeating the stage at
most $b\le t$ times removes all colors of $X$ from $F$.

It remains to bound the number of recolorings. During the treatment
of one nonempty set $W_c$, a vertex is recolored at most once in each
of the two deterministic procedures, at most
$P_{r-a,a}t^{p-1}$ times in each of the two applications of the
induction hypothesis, and at most once when the vertices of $W_c$
receive the color $c$. At a stage with $U=\emptyset$, each vertex
is recolored at most once, which is smaller than the same bound.
There are at most $r+1$ nonempty sets $W_c$ at one stage and at most
$t$ stages. Hence every vertex is recolored at most
\begin{equation*}
 (r+1)t\bigl(2P_{r-a,a}t^{p-1}+3\bigr)
\end{equation*}
times. 
Define $P_{r,a}=(r+1)(2P_{r-a,a}+3)$ for $p\ge1$. 
Since $t\ge1$ and $p\ge1$, we have $t\le t^p$.
Therefore 
\[
(r+1)t\bigl(2P_{r-a,a}t^{p-1}+3\bigr) \leq (r+1)\bigl(2P_{r-a,a}t^p+3t^p\bigr)=P_{r,a}t^p.
\]
The constant $P_{r,a}$ depends only on $r$ and $a$. This completes
the induction on $p$ and proves the lemma.
\end{proof}

We next improve the bound on the number of layers.
Instead of first coloring the subgraph induced by vertices of
small degree, we estimate the number of edges in this subgraph
and apply the following elementary fact.
Every nonempty graph $J$ has an independent set of size at least
$\frac{|V(J)|^2}{|V(J)|+2|E(J)|}$.
To verify it, choose a uniformly random ordering of $V(J)$ and
select each vertex that precedes all of its neighbors.
The selected vertices form an independent set.
Its expected size is $\sum_v1/(d_J(v)+1)$, which is at least
the claimed bound by the Cauchy--Schwarz inequality.

\begin{lemma}\label{lem:layers}
Let $d\ge2$ be an integer and let $0<\eps\le1$. If $G$ is a graph on $n\ge1$ vertices and $\mad(G)\le d-\eps$, then $G$ has a $(d-1)$-degree partition with at most
\begin{equation}\label{eq:layer-count}
 1+\frac{(d+\eps+1)^2}{8\eps}\log n
\end{equation}
sets.
\end{lemma}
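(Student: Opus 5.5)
The plan is the greedy layer-peeling construction behind Feghali's layered partition, with the $\eps$-dependent fraction of each layer supplied by the independent-set fact just stated. Set $G_0=G$. Given the current graph $G_j$ on $m\ge1$ vertices, let $S_j$ be the set of vertices of degree at most $d-1$ in $G_j$. Choose an independent set $V_{j+1}\subseteq S_j$ that is as large as possible, using the random-ordering argument, and put $G_{j+1}=G_j-V_{j+1}$. Every vertex of $V_{j+1}$ has at most $d-1$ neighbours in $G_j$, so it has at most $d-1$ later neighbours. Hence the resulting sets form a $(d-1)$-degree partition in the sense of Definition~\ref{def:partition}. The only task is to show that each layer removes a constant fraction $c$ of the current vertices.

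For the fraction, I would run the random-ordering argument inside $G_j$, selecting a vertex $v\in S_j$ when it precedes all of its neighbours in $G_j$. The expected number of selected vertices is $\sum_{v\in S_j}1/(d_{G_j}(v)+1)$. Since $\mad(G_j)\le d-\eps$, we have $\sum_v d_{G_j}(v)\le(d-\eps)m$. I would lower-bound the sum by a linear minorant $\beta(z-x)$ of $x\mapsto 1/(x+1)$, valid on the integers $0\le x\le d-1$ and nonpositive on the integers $x\ge d$. Summing over all vertices then gives at least $\beta\bigl(zm-(d-\eps)m\bigr)$, and the parameters $\beta$ and $z$ are optimized afterwards. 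The equivalent edge-counting route goes as follows. Let $s=|S_j|$. Degree counting gives $2|E(G_j[S_j])|\le ds-\eps m$, and the stated fact then yields an independent set of size at least $s^2/((d+1)s-\eps m)$, which one minimizes over $s$. Either route gives $|V_{j+1}|\ge cm$ for an explicit $c=c(d,\eps)>0$.

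Once $|G_{j+1}|\le(1-c)|G_j|$ holds, we get $|G_j|\le n(1-c)^j\le ne^{-cj}$. So $G_j$ is empty once $j>\log n/c$, and the number of sets is at most $1+\log n/c$. The case $n=1$ is consistent, since then one set suffices. It remains to check that $c\ge 8\eps/(d+\eps+1)^2$.

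I expect the main obstacle to be exactly this constant. The naive optimization of either bound above only gives $c=4\eps/(d+1)^2$: the real-variable relaxation bounds $(x+1)(z-x)$ by $((z+1)/2)^2$ with $z=d$. This is weaker than $8\eps/(d+\eps+1)^2$ for small $d$ and $\eps$, so it does not by itself match~\eqref{eq:layer-count}. To close the gap I would try three things. First, exploit integrality: the minorant only needs to be nonpositive at the integers $x\ge d$, so the zero $z$ may be pushed toward $d+\eps$, matching the shape $(d+\eps+1)^2$. Second, count the edges between $S_j$ and the high-degree vertices, instead of only $\eps m$ of slack. Third, choose the threshold defining $S_j$ more cleverly, using vertices of degree below $d$ weighted by their degree. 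If none of these reproduces the factor $8$ exactly, the structure of the argument is unchanged and only the constant in~\eqref{eq:layer-count} would need adjusting.
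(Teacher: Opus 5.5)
Your construction is the same as the paper's: peel off an independent set of vertices of degree at most $d-1$, then bound the number of sets using $(1-c)^j\le e^{-cj}$. The gap is that you never establish the stated constant. The only fraction you actually prove is $c=4\eps/(d+1)^2$. This is strictly smaller than $8\eps/(d+\eps+1)^2$ for \emph{every} $d\ge2$ and $0<\eps\le1$, since $(\sqrt2-1)(d+1)>1\ge\eps$, and the loss tends to a factor $2$ as $d$ grows; it is not confined to small $d$ and $\eps$. So what you prove is the lemma with $(d+1)^2/(4\eps)$ in place of $(d+\eps+1)^2/(8\eps)$. The explicit constant is part of the statement and is carried into \eqref{eq:main-explicit}, so ``adjusting the constant'' is not an option.

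Your first repair cannot supply the missing factor. A minorant $\beta(z-x)$ that is nonpositive at $x=d$ must have $z\le d$. More fundamentally, take odd $d$ and the degree sequence in which a $2\eps/(d+1)$ fraction of vertices have degree $(d-1)/2$ and all others have degree $d$. Its average is exactly $d-\eps$, and it gives $\sum_{v\in S_j}1/(d_{G_j}(v)+1)=4\eps m/(d+1)^2$. Hence no argument using only full degrees and the total degree bound can do better.

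The missing idea is your second suggestion, made precise by applying $\mad(G)\le d-\eps$ a second time, to the subgraph induced by $O=V(G_j)\setminus S_j$. Every vertex of $O$ has degree at least $d$ in $G_j$, while $2|E(G_j[O])|\le(d-\eps)|O|$. So at least $\eps|O|=\eps(m-s)$ edges join $O$ to $S_j$. These edges count in the degree sum of $S_j$ but are not edges of $G_j[S_j]$, so your estimate improves to
\[
 2|E(G_j[S_j])|\le ds-\eps m-\eps(m-s)=(d+\eps)s-2\eps m .
\]
In particular $s\ge2\eps m/(d+\eps)>0$ and $(d+\eps+1)s-2\eps m\ge s>0$. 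The independent-set fact applied to $G_j[S_j]$ then gives
\[
 |V_{j+1}|\ge\frac{s^2}{(d+\eps+1)s-2\eps m}\ge\frac{8\eps m}{(d+\eps+1)^2},
\]
where the last inequality is equivalent to $\bigl((d+\eps+1)s-4\eps m\bigr)^2\ge0$. This is exactly how the paper obtains the constant. With it, the rest of your argument goes through unchanged; note also that $8\eps/(d+\eps+1)^2<1$, so the decay step is legitimate.

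The independent set must be found using degrees inside $G_j[S_j]$, as in your edge-counting route. The random ordering on full degrees in $G_j$ from your first route discards precisely these cross edges.
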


\begin{proof}
Let $H$ be a nonempty induced subgraph of $G$, and let $h=|V(H)|$.
Define $B=\{v\in V(H):d_H(v)\le d-1\}$, $b=|B|$,
and $O=V(H)\setminus B$.
Write $e_H(B,O)$ for the number of edges with one endpoint in $B$ and the other in $O$. 
Every vertex of $O$ has degree at least $d$. 
Since $H[O]$ is a subgraph of $G$, $2|E(H[O])|\le(d-\eps)|O|$.
Consequently,
\begin{equation}\label{eq:cross-edges}
 e_H(B,O)
 =\sum_{v\in O}d_H(v)-2|E(H[O])|
 \ge\eps|O|=\eps(h-b).
\end{equation}

The sum of the degrees of the vertices in $B$ satisfies
\[
 2|E(H[B])|+e_H(B,O)
 \le(d-\eps)h-d(h-b)=db-\eps h.
\]
Combining this inequality with \eqref{eq:cross-edges} gives
\begin{equation}\label{eq:inside-bound-one}
 2|E(H[B])|\le(d+\eps)b-2\eps h.
\end{equation}

By \eqref{eq:inside-bound-one}, we have $b\ge 2\eps h/(d+\eps)>0$. Moreover, $(d+\eps+1)b-2\eps h\ge b>0$. Applying the preceding independent-set bound to $H[B]$, we obtain an independent set $I\subseteq B$ such that
\[
|I|\ge \frac{b^2}{b+2|E(H[B])|}\ge \frac{b^2}{(d+\eps+1)b-2\eps h}\ge \frac{8\eps h}{(d+\eps+1)^2}.
\]
The last inequality is equivalent to $\bigl((d+\eps+1)b-4\eps h\bigr)^2\ge0$.

Thus every nonempty induced subgraph $H$ contains an independent set $I$ of size at least $8\eps|V(H)|/(d+\eps+1)^2$, and every vertex of $I$ has degree at most $d-1$ in $H$. Starting with $G$, repeatedly delete such an independent set from the current induced subgraph until no vertices remain. The deleted sets are independent, and each vertex has at most $d-1$ neighbors in sets deleted later. Hence they form a $(d-1)$-degree partition.

Since $d\ge2$ and $0<\eps\le1$, we have $0<8\eps/(d+\eps+1)^2<1$. After $j$ deletions, at most
\[
n\left(1-\frac{8\eps}{(d+\eps+1)^2}\right)^j
\le n\exp\left(-\frac{8\eps j}{(d+\eps+1)^2}\right)
\]
vertices remain, where the inequality follows from $1-x\le e^{-x}$ for $x\ge0$. Let $t$ be the number of deleted sets. At least one vertex remains after the first $t-1$ deletions, so $1\le n\exp\bigl(-8\eps(t-1)/(d+\eps+1)^2\bigr)$. Taking logarithms gives $8\eps(t-1)/(d+\eps+1)^2\le\log n$, and hence $t\le1+\frac{(d+\eps+1)^2}{8\eps}\log n$, as required.
\end{proof}

\section{Proofs of Theorems \ref{thm:main} and \ref{thm:forest}}\label{sec:3}

We first state the structural result used in the proof of Theorem~\ref{thm:main}.

\begin{theorem}\label{thm:depth}
Let $s\ge0$, $a\ge1$, and $t\ge1$ be integers. If a graph $G$ on $n\ge1$ vertices has an $s$-degree partition with $t$ sets, then
\begin{equation}\label{eq:depth-bound}
 \diam R_{s+a+1}(G)\le \widetilde C_{s,a}nt^{\lfloor s/a\rfloor},
\end{equation}
where $\widetilde C_{s,a}$ depends only on $s$ and $a$.
\end{theorem}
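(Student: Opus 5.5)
The plan is to transform an arbitrary proper $(s+a+1)$-coloring $\alpha$ into an arbitrary target coloring $\beta$ by recoloring layers from the last to the first. Each layer $V_j$ will be set to its target colors once, and we pay for this by using Lemma~\ref{lem:remove-colors} to clear colors from the earlier layers. Set $C=\{1,\dots,s+a+1\}$ and $r=s$. For every $1\le b\le t$, the layered prefix $F_b=G[V_1\cup\cdots\cup V_b]$ has forward degree at most $s$, because every vertex has at most $s$ later neighbors. So Lemma~\ref{lem:remove-colors} applies to $F_b$ with any $X\subseteq C$ of size $a$, and each application recolors every vertex at most $P_{s,a}t^{\lfloor s/a\rfloor}$ times.

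We process $j=t,t-1,\dots,1$ while maintaining the invariant that $V_{j+1}\cup\cdots\cup V_t$ already carries $\beta$. At step $j$ we recolor the vertices of $V_j$ to their $\beta$-colors. Since $V_j$ is independent, a vertex $v\in V_j$ can receive $\beta(v)$ provided this color does not appear on its neighbors. Its later neighbors are colored by $\beta$, which is proper, so the only obstruction comes from neighbors in $V_1\cup\cdots\cup V_{j-1}$. To remove this obstruction, partition $C$ into $\lceil (s+a+1)/a\rceil$ blocks $X_1,X_2,\dots$, each of size $a$; the last block may overlap an earlier one so that all have size exactly $a$, which is allowed since the lemma only needs $|X|=a$. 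For each block $X_i$ in turn, apply Lemma~\ref{lem:remove-colors} to the prefix $F_{j-1}$ (skipping this when $j=1$) with prescribed set $X_i$. This fixes everything outside $F_{j-1}$, so $V_j$ and the later layers are untouched. After that application no vertex of $V_1\cup\cdots\cup V_{j-1}$ carries a color of $X_i$. We then recolor every $v\in V_j$ with $\beta(v)\in X_i$ directly to $\beta(v)$. This move is proper: earlier neighbors avoid $X_i$, later neighbors are colored by $\beta$, and $V_j$ is independent.

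Once all blocks have been handled, $V_j$ agrees with $\beta$ and the invariant holds for $j-1$. One subtlety needs checking. Moving a vertex of $V_j$ changes the fixed colors seen by the prefix, and a later block's removal step also changes the prefix. Neither causes trouble: the lemma is applied afresh to the current coloring each time, and vertices of $V_j$ that have already been set are never recolored again, because they lie outside $F_{j-1}$.

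For the count, a vertex in $V_i$ is recolored during steps $j>i$ (as part of prefixes) and once at step $i$. There are at most $t$ steps, each making at most $\lceil (s+a+1)/a\rceil$ applications of the lemma, so every vertex is recolored at most $t\lceil (s+a+1)/a\rceil P_{s,a}t^{\lfloor s/a\rfloor}+1$ times. This gives total length $O_{s,a}(n\,t^{\lfloor s/a\rfloor+1})$, which is one factor of $t$ too many. This is the main obstacle. To remove the extra factor, I would not process layer by layer. Instead, I would reorganize the argument so that the outer loop over layers is absorbed into the $t$-stage structure already inside Lemma~\ref{lem:remove-colors}. Concretely, I would handle the coloring-to-target problem with the same stage-by-stage scheme as the lemma's proof. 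Let $h$ be the largest index with a vertex of $V_h$ disagreeing with $\beta$. Group the disagreeing vertices of $V_h$ by target color $c$. For each group, clear a size-$a$ set $Y\ni c$ from $U=V_1\cup\cdots\cup V_{h-1}$ via the lemma at depth $\lfloor s/a\rfloor$; this costs $t^{\lfloor s/a\rfloor}$ per vertex. Then recolor the group.

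Done this way, each vertex is touched $O(t^{\lfloor s/a\rfloor})$ times per stage and there are $t$ stages, which still gives $t^{\lfloor s/a\rfloor+1}$. The genuine fix I would try first is to exploit the lemma's $p=0$ base case, whose cost does not depend on $t$. Lemma~\ref{lem:BH} directly handles the final $a$ colors, so the real recursion depth is $\lfloor s/a\rfloor$ rather than $\lfloor s/a\rfloor+1$. I would therefore run the top-level argument as the ``$p+1$'' level of the lemma's induction: the top level supplies the single factor $t$, and the inner calls work on a forward degree reduced by $a$, costing $t^{\lfloor s/a\rfloor-1}$ each. To make this work, first apply the greedy ``recolor with a color of $X$ absent from later neighbors'' trick to reduce the forward degree. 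Verifying that this reduction works at the top level, where the target $\beta$ is arbitrary, is the step I expect to be hardest. The alternative when $s<a$ is easy: Lemma~\ref{lem:BH} gives a bound linear in $n$ directly, with $|C|=s+a+1\le 2a$.
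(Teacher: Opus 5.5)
Your proposal has a genuine gap. The only scheme you carry out in full, sweeping $j=t,\dots,1$ and calling Lemma~\ref{lem:remove-colors} on $F_{j-1}$ for every block $X_i$, is valid but yields only $O(n\,t^{\lfloor s/a\rfloor+1})$. Your stage-by-stage variant has the same cost, as you note yourself. The proposed repair stops exactly at the step you call hardest: making the degree-reducing greedy step compatible with an arbitrary target $\beta$. So the theorem is not proved. Your base case $s<a$ via Lemma~\ref{lem:BH} is correct and matches the paper.

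The missing idea is to treat the two endpoints symmetrically and meet at a canonical partial coloring, rather than steering $\alpha$ toward $\beta$ layer by layer.
\begin{itemize}
\item Fix $X\subseteq C$ with $|X|=a$, and apply Lemma~\ref{lem:remove-colors} just once to each of $\alpha$ and $\beta$, with $F=G$ (i.e.\ $b=t$). The lemma's internal stages already sweep all $t$ layers, so this costs only $P_{s,a}t^{p}$ per vertex, where $p=\lfloor s/a\rfloor$.
\item From each of the two resulting $X$-free colorings, process $V_t,\dots,V_1$ and give each vertex the first color of $X$ absent from its later neighbors, if any. This is proper because unprocessed earlier vertices still avoid $X$.
\item The inputs contain no color of $X$, so every decision depends only on $X$-colors created earlier by this same deterministic rule. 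Hence both runs produce the \emph{same} set $S$ with the \emph{same} colors on it. This is what removes the dependence on $\beta$.
\item Each $v\notin S$ sees all $a$ colors of $X$ on distinct later neighbors lying in $S$. So the sets $V_i\setminus S$ form an $(s-a)$-degree partition of $G-S$ with at most $t$ sets. Both restrictions to $G-S$ are colorings from $C\setminus X$, which has size $(s-a)+a+1$.
\item The induction hypothesis of the theorem itself (on $p$, not that of the lemma) now joins the two restrictions in $\widetilde C_{s-a,a}n\,t^{p-1}$ moves. These moves stay proper in $G$ because $S$ is frozen on colors of $X$.
\end{itemize}
The total length is at most $2nP_{s,a}t^p+2n+\widetilde C_{s-a,a}nt^{p-1}=O_{s,a}(nt^p)$. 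Your instincts (greedy reduction of the forward degree, and recursion at depth $p-1$) were right. But without the single global call to the lemma and the two-sided reduction to a common $S$, the extra factor of $t$ does not go away.
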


For $a=1$, Theorem \ref{thm:depth} recovers the $O_s(nt^s)$ bound of
Feghali~\cite[Lemma~2]{Feghali2021}.
For general $a$, the proof combines Feghali's layered method
with the simultaneous treatment of several colors by
Bousquet and Heinrich~\cite{BousquetHeinrich2022}
to obtain the stated bound in terms of $t$.

\begin{proof}[\bfseries{Proof of Theorem~\ref{thm:depth}}]
Let $C$ be a set of $s+a+1$ colors, and let $V_1,\ldots,V_t$ be an $s$-degree partition of $G$. 
For fixed $a$, we prove \eqref{eq:depth-bound} by induction on $p=\lfloor s/a\rfloor$, simultaneously for all integers $s\ge0$ with the same value of $p$.

Suppose first that $s<a$. Order the vertices by increasing set index. 
Every vertex has at most $s$ later neighbors, and the full list $C$ has size $s+a+1$. 
Thus the list condition in Lemma~\ref{lem:BH} holds, and $|C|\le2a$. 
The lemma transforms any two proper $C$-colorings into one another while recoloring each vertex at most $|C|$ times. 
Since each recoloring step changes the color of exactly one vertex, the resulting sequence has length at most $n|C|=(s+a+1)n$.
Taking $\widetilde C_{s,a}=s+a+1$, we obtain \eqref{eq:depth-bound}, since $p=0$ and hence $t^p=1$.

Now let $p\ge1$, and assume that the assertion holds for every
integer $s'\ge0$ satisfying $\lfloor s'/a\rfloor<p$. Let $s$ be an
integer such that $\lfloor s/a\rfloor=p$. Then $s\ge a$ and
$\lfloor(s-a)/a\rfloor=p-1$, so the induction hypothesis applies
with $s-a$ in place of $s$.
Let $\varphi$ and $\psi $ be proper $C$-colorings of $G$, and choose $X\subseteq C$ with $|X|=a$. By Lemma~\ref{lem:remove-colors}, applied with $F=G$, transform $\varphi$ and $\psi $ into colorings $\varphi_1$ and $\psi _1$ that use no color of $X$. In each transformation, every vertex is recolored at most $P_{s,a}t^{\lfloor s/a\rfloor}$ times.

Fix an order of the colors in $X$ and an order of the vertices inside each $V_i$. Starting from $\varphi_1$, process $V_t,V_{t-1},\ldots,V_1$. When a vertex $v$ is considered, recolor it with the first color of $X$ that is absent from its later neighbors, if such a color exists; otherwise leave it unchanged. The recoloring is proper because unprocessed vertices in earlier sets still avoid $X$, vertices in one set are independent, and the selected color is absent from all later neighbors. Apply the same rule to $\psi _1$.

The vertices that receive colors in $X$, and the colors assigned to them, are the same in the two procedures. Indeed, consider the vertices in the common processing order. Before a vertex is considered, the colors from $X$ on all of its later neighbors agree in the two procedures by induction on this order. Hence the rule makes the same decision and, when it recolors the vertex, chooses the same color. Let $S$ be this common set of vertices, and let the resulting colorings be $\varphi_2$ and $\psi _2$.

If $v\notin S$, then every color of $X$ occurs on a later neighbor of $v$. These neighbors are distinct and lie in $S$. Hence $v$ has at most $s-a$ later neighbors in $G-S$. After empty sets are omitted, the sets $V_i\setminus S$ form an $(s-a)$-degree partition of $G-S$ with at most $t$ sets. On $G-S$, both $\varphi_2$ and $\psi _2$ use the color set $C\setminus X$, whose size is
\[
 |C\setminus X|=s+1=(s-a)+a+1.
\]
Let $m=|V(G-S)|$, and let $t'$ be the number of nonempty sets among $V_1\setminus S,\ldots,V_t\setminus S$. 
Thus $t'\le t$. If $m=0$, take the empty recoloring sequence from $\varphi_2|_{G-S}$ to $\psi _2|_{G-S}$. 
Suppose that $m\ge1$. Since $\left\lfloor (s-a)/a\right\rfloor=p-1$, the induction hypothesis gives a recoloring sequence from
$\varphi_2|_{G-S}$ to $\psi _2|_{G-S}$ of length at most
\[
\widetilde C_{s-a,a}m(t')^{p-1}
 \le \widetilde C_{s-a,a}m t^{p-1}.
\]
Every vertex of $S$ keeps a color in $X$, whereas the sequence on $G-S$ uses only colors in $C\setminus X$. 
Hence this sequence is also proper in $G$.

Finally, reverse the sequence from $\psi $ to $\psi _2$. 
We now estimate the total length of the resulting sequence from $\varphi$ to $\psi $. 
Each of the two applications of Lemma~\ref{lem:remove-colors} has length at most $nP_{s,a}t^p$, because every vertex is recolored at most $P_{s,a}t^p$ times. 
Each of the two deterministic procedures has length at most $n$, because every vertex is considered once and is recolored at most once. 
The middle sequence has length at most $\widetilde C_{s-a,a}m t^{p-1}$. 
Therefore the complete sequence has length at most $2nP_{s,a}t^p+2n+\widetilde C_{s-a,a}m t^{p-1}.$
Since $m\le n$, $t\ge1$, and $p\ge1$, this is at most
\[
 \begin{aligned}
 2nP_{s,a}t^p+2nt^p+\widetilde C_{s-a,a}nt^p
 &=
 \bigl(2P_{s,a}+2+\widetilde C_{s-a,a}\bigr)nt^p.
 \end{aligned}
\]
Define $\widetilde C_{s,a}=2P_{s,a}+2+\widetilde C_{s-a,a}$.
Since $\varphi$ and $\psi $ were arbitrary proper $C$-colorings of
$G$, this proves \eqref{eq:depth-bound} for the present value of $s$.
Since $s$ was arbitrary subject to $\lfloor s/a\rfloor=p$, the
assertion holds for every such $s$, and this completes the induction
on $p$.
\end{proof}

We now combine Lemma \ref{lem:layers} with Theorem \ref{thm:depth}
to prove Theorem \ref{thm:main}.
\begin{proof}[\bfseries{Proof of Theorem~\ref{thm:main}}]
If $d=1$, then $\mad(G)\le1-\eps<1$. A graph containing an edge has a subgraph on two vertices with average degree one, so $G$ is edgeless. Any two proper $k$-colorings can then be transformed into one another by recoloring each differing vertex once. Thus $\diam R_k(G)\le n$, and \eqref{eq:main-explicit} follows with $C_{1,k}=1$.

Suppose that $d\ge2$. Lemma~\ref{lem:layers} gives a $(d-1)$-degree partition with a number $t$ of sets satisfying \eqref{eq:layer-count}. Apply Theorem~\ref{thm:depth} with $s=d-1$ and $a=k-d$. Then $s+a+1=k$, and
\[
 \diam R_k(G)
 \le \widetilde C_{d-1,k-d}nt^{\lfloor(d-1)/(k-d)\rfloor}.
\]
Substituting \eqref{eq:layer-count} and taking $C_{d,k}=\widetilde C_{d-1,k-d}$ proves \eqref{eq:main-explicit}. The estimate \eqref{eq:main-asymptotic} follows because $d$, $k$, and $\eps$ are fixed.
\end{proof}

We next prove the sharp linear bound in Theorem \ref{thm:forest}.
\begin{proof}[\bfseries{Proof of Theorem~\ref{thm:forest}}]
Since every cycle has average degree two, the assumption $\mad(G)<2$ implies that $G$ is a forest. Let its components be $T_1,\ldots,T_r$, and write $m_i=|V(T_i)|$. Since $0<\eps<2$, we have $M\ge1$. If $m_i\ge2$, then
\[
 2-\frac2{m_i}=\frac{2|E(T_i)|}{|V(T_i)|}\le2-\eps.
\]
Thus $m_i\le2/\eps$ and hence $m_i\le M$. An isolated vertex also satisfies $m_i=1\le M$.

For $m\ge1$, let $D_m$ be the largest value of $\diam R_3(T)$ over all trees $T$ on $m$ vertices. Asgarli, Krehbiel, MacLean, and Zaimi~\cite[Theorem~1.1 and Appendix~A]{AsgarliEtAl2026} proved that
\[
 (D_1,D_2,D_3,D_4,D_5,D_6)=(1,3,4,6,7,11)
\]
and, for $m\ge7$,
\[
 D_m=
 \begin{cases}
  \dfrac{m^2+3}{4}, & m \text{ is odd},\\[5pt]
  \dfrac{m^2}{4}+2, & m \text{ is even}.
 \end{cases}
\]
For two colorings of $G$, every recoloring step acts in one component, so their distance is at least the sum of the corresponding distances in the components. Conversely, shortest sequences in the components can be performed one after another while all other components are fixed. Hence the distance is the sum of the component distances, and taking the maximum gives
\[
 \diam R_3(G)=\sum_{i=1}^{r}\diam R_3(T_i)
 \le\sum_{i=1}^{r}D_{m_i}
 \le\rho_M\sum_{i=1}^{r}m_i
 =\rho_M n,
\]
where $\rho_M=\max_{1\le m\le M}D_m/m$.

It remains to evaluate this maximum. Let $r_m=D_m/m$. For $1\le m\le6$,
\[
 (r_1,r_2,r_3,r_4,r_5,r_6)
 =\left(1,\frac32,\frac43,\frac32,\frac75,\frac{11}{6}\right).
\]
For $m\ge7$,
\[
 r_m=
 \begin{cases}
  \dfrac m4+\dfrac{3}{4m}, & m \text{ is odd},\\[6pt]
  \dfrac m4+\dfrac2m, & m \text{ is even}.
 \end{cases}
\]
The ratios increase within each parity class. Indeed, for odd $m\ge7$ and even $m\ge8$, respectively,
\[
 r_{m+2}-r_m=\frac12-\frac{3}{2m(m+2)}>0
 \quad\text{and}\quad
 r_{m+2}-r_m=\frac12-\frac4{m(m+2)}>0.
\]
If $M\ge7$ is odd, then
\[
 r_M-r_{M-1}=\frac{M^2-6M-3}{4M(M-1)}>0,
\]
whereas, if $M\ge8$ is even, then
\[
 r_M-r_{M-1}=\frac{M^2+4M-8}{4M(M-1)}>0.
\]
Moreover, $r_7=13/7>11/6=r_6$. Hence, for every $M\ge7$, the maximum of $r_m$ over $1\le m\le M$ is attained at $m=M$. Together with the six initial values, this gives the stated formula for $\rho_M$ and proves \eqref{eq:forest-bound}.

Finally, fix $\eps$ and choose $m_*\le M$ such that $D_{m_*}/m_*=\rho_M$. Let $T_*$ be an $m_*$-vertex tree satisfying $\diam R_3(T_*)=D_{m_*}$, and for $q\ge1$ let $G_q$ be the disjoint union of $q$ copies of $T_*$. Let $H$ be a nonempty subgraph of $G_q$, and write $h=|V(H)|$ and $c$ for the number of components of $H$. Every component of $H$ has at most $m_*$ vertices, so $c\ge h/m_*$. Since $H$ is a forest and $m_*\le M\le2/\eps$,
\[
 \frac{2|E(H)|}{h}\le2-\frac{2c}{h}
 \le2-\frac2{m_*}\le2-\eps.
\]
Thus $\mad(G_q)\le2-\eps$, while
\[
 \diam R_3(G_q)=qD_{m_*}=\rho_M q m_*=\rho_M|V(G_q)|.
\]
Therefore equality holds for infinitely many values of $n$, and the coefficient $\rho_M$ is best possible.
\end{proof}

\section{Concluding remarks}

Theorem~\ref{thm:depth} shows that $a$ colors can be handled together and that the degree toward later sets decreases by $a$ at each recursive stage. If the number of sets is $n$, the theorem gives a bound of order
$n^{1+\lfloor s/a\rfloor}$,
which has the same sequence of exponents as the general method of Bousquet and Heinrich. Under the condition $\mad(G)\le d-\eps$, Lemma~\ref{lem:layers} reduces the number of sets to order $\log n$ and produces the exponent in Theorem~\ref{thm:main}.

The proof does not determine whether $\lfloor s/a\rfloor$ is best possible. 
A further problem is to decide whether additional properties of the partition can yield a linear bound for $k<2d$. 
\medskip

\section*{Acknowledgements}
\noindent 
This research is supported by National Key R\&D Program of China under grant number 2024YFA1013900 and by NSFC under grant number 12471327.

\end{document}